\title{Congruences of a square matrix
and its transpose%
\footnotetext{This is the authors' version of a work that was published in Linear Algebra Appl.
389 (2004) 347--353.}}
\author{Roger A. Horn
\\Department of Mathematics, University of
Utah\\ Salt Lake City, Utah
84103, rhorn@math.utah.edu
 \and
Vladimir V. Sergeichuk\thanks{The research was
done while this author was
visiting the University of Utah
supported by NSF grant
DMS-0070503.}
\\ Institute of
Mathematics, Tereshchenkivska
3\\ Kiev, Ukraine,
sergeich@imath.kiev.ua}
\date{}
\begin{document}

%\large\bf

\renewcommand{\le}{\leqslant}
\renewcommand{\ge}{\geqslant}
\newcommand{\diagg}%
{\mathop{\rm diag}\nolimits}

\newcommand{\sdotss}%
{\text{\raisebox{-1.8pt}{$\cdot\,$}%
 \raisebox{1.5pt}{$\cdot$}%
\raisebox{4.8pt}{$\,\cdot$}}}

\newcommand{\sdotsss}%
{\text{\raisebox{-2.2pt}{$\cdot\,$}%
 \raisebox{1.7pt}{$\cdot$}%
\raisebox{5.6pt}{$\,\cdot$}}}

\newtheorem{theorem}{Theorem}
\newtheorem{lemma}[theorem]{Lemma}
\newtheorem{remark}[theorem]{Remark}

\maketitle
\begin{abstract}
It is known that any square matrix $A$ over any field is congruent
to its transpose: $A^{\mathrm T}={S}^{\mathrm T}AS$ for some
nonsingular $S$; moreover, $S$ can be chosen such that $S^2=I$,
that is, $S$ can be chosen to be involutory. We show that $A$ and
$A^{\mathrm T}$ are *congruent over any field $\mathbb F$ of
characteristic not two with involution $a\mapsto \bar a$ (the
involution can be the identity): $A^{\mathrm T}=\bar{S}^{\mathrm
T}AS$ for some nonsingular $S$; moreover, $S$ can be chosen such
that $\bar{S}S=I$, that is, $S$ can be chosen to be coninvolutory.
The short and simple proof is based on Sergeichuk's canonical form
for *congruence [\emph{Math. USSR, Izvestiya} 31 (3) (1988)
481--501]. It follows that any matrix $A$ over $\mathbb F$ can be
represented as $A=EB$, in which $E$ is coninvolutory and $B$ is
symmetric.

{\it AMS classification:} 15A21

{\it Keywords:} Congruence;
Sesquilinear forms; Canonical
forms
\end{abstract}

\section{Introduction}
 \label{sec0}

We work over a field $\mathbb F$
of characteristic not two with
involution $a\mapsto \bar{a}$,
that is, a bijection (perhaps
the identity) on $\mathbb F$
such that
\begin{equation*}%\label{1ig}
 \overline{a+b}= \bar{a}+\bar{b},
 \quad \overline{ab}=\bar{b}
 \bar{a},\quad
 \bar{\bar{a}}=a.
\end{equation*}
For each matrix $A=[a_{ij}]$
over $\mathbb F$, we define
$A^*=\overline{A}^{\mathrm
T}=[\bar{a}_{ji}].$ If $S^*AS=B$
for some nonsingular matrix $S$,
then $A$ and $B$ are said to be
*{\it\!congruent} (or
\emph{congruent} if the
involution $a\mapsto \bar{a}$ is
the identity). Except for
\eqref{777}, all our matrices
are over $\mathbb F$.

In 1980, Gow used Riehm's classification of bilinear forms
\cite{Riehm} to show that any nonsingular square matrix $A$ over
any field is congruent to its transpose: $A^{\mathrm
T}={S}^{\mathrm T}AS$ for some nonsingular $S$; moreover, Gow
showed that $S$ can be chosen such that $S^2=I$, that is, $S$ can
be chosen to be \emph{involutory} \cite{Gow}. Independently at
about the same time, Yip and Ballantine obtained the same theorem
without the hypothesis of nonsingularity \cite{Y&B}. Apparently
unaware of \cite{Gow} and \cite{Y&B},
\raisebox{1pt}{-}\!\!Docovi\'{c} and Ikramov (using Riehm's
classification again (\cite{Riehm} and \cite{R&S-F})) showed in
2002 that $A$ and $A^{\mathrm T}$ are congruent \cite{doc1}.

We are interested in a broader result: Over $\mathbb F$, any
square matrix $A$ is *congruent to $A^{\mathrm T}$; moreover, a
matrix $S$ that gives the *congruence can be chosen such that
$\bar{S}S=I$, that is, $S$ can be chosen to be
\emph{coninvolutory}. Since the involution on $\mathbb F$ can be
the identity, our result includes that of \cite{Y&B} except for
the case of a field of characteristic two.

\section{A canonical form for *congruence}
 \label{sec1}

Our proof that $A$ and
$A^{\mathrm T}$ are *congruent
over $\mathbb F$ is based on the
classification of matrices for
*congruence (up to
classification of Hermitian
matrices) that was obtained in
\cite[Theorem 3]{ser}.

A matrix $M$ is a *{\it\!cosquare} if $ M=A^{-*}A $ for some
nonsingular $A$; $A^{-*}$ denotes $(A^*)^{-1}$. If $M$ is a
*cosquare, every matrix $C$ such that $C^{-*}C=M$ is called a {\it
{\rm *}\!cosquare root} of $M$; we choose any *cosquare root and
denote it by $\sqrt[\displaystyle *]{M}$.

For a polynomial
$f(x)=a_0x^n+a_1x^{n-1}+\dots+a_n\in
{\mathbb F}[x]$ we define
\begin{align*}
\bar{f}(x)&=\bar{a}_0x^n+
\bar{a}_1x^{n-1}+\dots+
\bar{a}_n,\text{\quad and}\\
f^{\vee}(x)
&=\bar{a}_n^{-1}(1+\bar{a}_1x+
\dots+
\bar{a}_{n}x^{n})\text{\quad if
$a_0=1$ and $a_n\ne 0$}.
\end{align*}

Every square matrix is similar
to a direct sum of {\it
Frobenius blocks}
\begin{equation}\label{3}
F_{p^t}=\begin{bmatrix} 0&&
0&-c_n\\1&\ddots&&\vdots
\\&\ddots&0&-c_2\\
0&&1& -c_1 \end{bmatrix},
\end{equation}
in which $p(x)^t=x^n+c_1
x^{n-1}+\dots+ c_n$ is an
integer power of a polynomial
$p(x)$ that is irreducible over
$\mathbb F$.

\begin{lemma}[{\cite[\S 3]{ser}}]
 \label{lemm}
Let $p(x)$ be irreducible over
$\mathbb F$ and let $F_{p^t}$ be
the $n\times n$ Frobenius block
\eqref{3}.

 {\rm(a)}
If ${\cal A}$ is an $n\times n$ matrix over $\mathbb F$ and ${\cal
A} = F_{p^t}^*{\cal A}F_{p^t}$, then ${\cal A}$ is a
\emph{Toeplitz matrix}, that is, ${\cal
A}=[\alpha_{i-j}]_{i,j=1}^n$ for some scalars $\alpha_{1-n},
\ldots, \alpha_{-1}, \alpha_0, \alpha_1, \ldots, \alpha_{n-1}$ in
$\mathbb F$.

 {\rm(b)}
If ${\cal A}$ is a {\rm *}\!cosquare root of $F_{p^t}$, then
${\cal A} = {\cal A}^*F_{p^t}=F_{p^t}^*{\cal A}F_{p^t}$, and so it
is a Toeplitz matrix. Moreover, it has the special form
\begin{equation}\label{10}
{\cal A}=\begin{bmatrix}
  a_0 & a_1&a_2&\cdots &a_{n-1} \\
  \bar{a}_0 & a_0&a_1&\cdots &a_{n-2} \\
  \bar{a}_1 & \bar{a}_0&a_0&\ddots &\vdots \\
  \vdots & \vdots&\ddots &\ddots &a_1\\
  \bar{a}_{n-2}&\bar{a}_{n-3}&\cdots&\bar{a}_0&a_0
\end{bmatrix}.
\end{equation}

 {\rm(c)}
$F_{p^t}$ is a {\rm *}\!cosquare
if and only if
\begin{equation}\label{eeq2}
 \left. \begin{matrix}p(x)\ne x,
  \quad p(x)=p^{\vee}(x),\text{ and}\\
\text{if the involution on $\mathbb F$ is the identity then also
$p(x)\ne x+(-1)^{n+1}$.}
\end{matrix}\right\}
\end{equation}

 {\rm(d)}
Suppose $p(x)$ satisfies the conditions \eqref{eeq2} and let $m$
denote the integer part of $(n-1)/2$. Then one may take
$\sqrt[\displaystyle *]{F_{p^t}}$ to have the form \eqref{10}, in
which $a_0=\dots=a_{m-1}=0$,
\[
a_m= \begin{cases}
    1 & \text{if $n$ is even and
    $p(x)\ne x-\!\!\sqrt[n-1]{1}$},
            \\
p(-1)^t & \text{if $n$ is odd
and $p(x)\ne x+1$,}
            \\
    b-\bar{b} & \text{for any
    $b\in\mathbb
    F$ such that $b\ne\bar{b}$, otherwise,}
  \end{cases}
\]
and $a_{m+1},\dots,a_{n-1}$ are
determined by the identity
$\sqrt[\displaystyle
*]{F_{p^t}}=(\sqrt[\displaystyle
*]{F_{p^t}})^*F_{p^t}$, i.e.,
\begin{equation}\label{11}
\begin{bmatrix}
  \bar{a}_0 & a_0&\cdots&a_{n-2}\\
  \bar{a}_1 & \bar{a}_0&\ddots&\vdots \\
  \vdots & \ddots&\ddots &a_0\\
  \bar{a}_{n-1} &
  \cdots&\bar{a}_1&\bar{a}_0
\end{bmatrix}
 \cdot F_{p^t}=
\begin{bmatrix}
  a_0 & a_1&\cdots&a_{n-1} \\
  \bar{a}_0 & a_0&\ddots&\vdots \\
  \vdots & \ddots&\ddots &a_1\\
  \bar{a}_{n-2} & \cdots&\bar{a}_0&a_0
\end{bmatrix}.
\end{equation}
\end{lemma}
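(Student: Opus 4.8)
The plan is to prove (a)--(d) in order, reducing each to the companion structure \eqref{3} together with two external ingredients: similar matrices have the same characteristic polynomial (needed in (c)), and the congruence canonical form of a Jordan block at $\pm1$ over a field with trivial involution (needed only for the exceptional clauses of (c) and (d)). Write $F=F_{p^t}$ and let $e_1,\dots,e_n$ be the standard basis; from \eqref{3}, $Fe_{k-1}=e_k$ for $2\le k\le n$ and $Fe_k=e_{k+1}$ for $k\le n-1$. For (a): if $\mathcal A=F^*\mathcal AF$, then for $2\le k,l\le n$,
\[
a_{kl}=e_k^*\mathcal Ae_l=(Fe_{k-1})^*\mathcal A(Fe_{l-1})=e_{k-1}^*\mathcal Ae_{l-1}=a_{k-1,l-1},
\]
so $\mathcal A=[\alpha_{i-j}]$ is Toeplitz. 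For (b): from $\mathcal A^{-*}\mathcal A=F$ we get $\mathcal A=\mathcal A^*F$, and applying $*$ gives $\mathcal A^*=F^*\mathcal A$; substituting back, $\mathcal A=\mathcal A^*F=F^*\mathcal AF$, so $\mathcal A$ is Toeplitz by (a). Reading $\mathcal A^*=F^*\mathcal A$ in the $(k,l)$ entry for $1\le k\le n-1$ gives $\overline{a_{lk}}=(Fe_k)^*\mathcal Ae_l=a_{k+1,l}$; with $a_{ij}=\alpha_{i-j}$ this says $\alpha_{1-s}=\overline{\alpha_s}$ for all relevant $s$, which is exactly the symmetry of \eqref{10} (with $a_k=\alpha_{-k}$).

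For the forward direction of (c): from $\mathcal A^{-*}\mathcal A=F$ we get $\mathcal AF\mathcal A^{-1}=\mathcal A\mathcal A^{-*}=(\mathcal A^{-*}\mathcal A)^{-*}=F^{-*}$, so $F$ is similar to $F^{-*}$. As a product of nonsingular matrices, $F$ is nonsingular, so $p(0)\ne0$, i.e.\ $p(x)\ne x$. Also $F^*=\overline{F}^{\mathrm T}$ has the same characteristic polynomial as $\overline F$, namely $\overline{p}^{\,t}$, so $F^{-*}$ has characteristic polynomial the monic reciprocal of $\overline{p}^{\,t}$, which by the definitions equals $(p^t)^{\vee}=(p^{\vee})^t$ (the operation $f\mapsto f^{\vee}$ being multiplicative). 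Equating the characteristic polynomials of $F$ and $F^{-*}$ gives $p^t=(p^{\vee})^t$, whence $p(x)=p^{\vee}(x)$, since $p$ is monic irreducible and divides $(p^{\vee})^t$. When the involution is the identity these invariants still allow the single borderline case $p(x)=x+(-1)^{n+1}$ (a Jordan block at $\pm1$); excluding it is the one step that needs more than similarity, namely the fact --- read off the canonical form of \cite{ser} --- that such a block is not of the form $C^{-*}C$ over a field with trivial involution. The reverse direction of (c) is supplied by the construction in (d).

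For (d): by (b) it suffices to find a nonsingular $\mathcal A$ of the form \eqref{10} with $\mathcal A=\mathcal A^*F$, that is, satisfying \eqref{11}; writing \eqref{11} out yields a system for the unknowns $a_0,\dots,a_{n-1}$ in which the coefficients of $p^t$ appear. Imposing $a_0=\dots=a_{m-1}=0$ makes the first block of equations vacuous --- this is exactly where $p=p^{\vee}$ is used --- and the first surviving equation constrains $a_m$ to the admissible choices in the statement: in the two principal cases it forces $a_m$ to a nonzero value ($a_m=1$ being admissible when $n$ is even, $a_m=p(-1)^t$ when $n$ is odd), and in the two exceptional cases it reduces to $a_m+\overline{a_m}=0$, solved by $a_m=b-\overline b$ with $b\ne\overline b$ --- possible exactly because the involution is then nontrivial, which is why those cases are barred from \eqref{eeq2} under a trivial involution. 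The remaining equations then determine $a_{m+1},\dots,a_{n-1}$ uniquely, and the resulting $\mathcal A$ is nonsingular: when $n$ is odd, reversing the column order of \eqref{10} (with $a_0=\dots=a_{m-1}=0$) produces a block-diagonal matrix whose two blocks are anti-triangular with anti-diagonal entries $a_m$ and $\overline{a_m}$, so $\det\mathcal A=\pm a_m^{m+1}(\overline{a_m})^m\ne0$; for even $n$ a slightly longer computation, which also uses the determined values of $a_{m+1},\dots,a_{n-1}$, again gives $\det\mathcal A\ne0$.

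I expect the hardest part to be the explicit analysis of \eqref{11} in (d) --- verifying that $p=p^{\vee}$ renders the leading equations redundant, identifying the exact condition on $a_m$ in each parity (and its solvability by a nonzero element outside the two exceptional cases), and proving nonsingularity when $n$ is even --- together with the single point in (c), the exclusion of the Jordan block $x+(-1)^{n+1}$ under a trivial involution, which genuinely requires the classification of \cite{ser} rather than similarity data alone.
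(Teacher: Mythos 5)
First, a point of context: the paper does not prove this lemma at all --- it is quoted verbatim from \cite[\S 3]{ser} --- so there is no internal proof to compare against, and your proposal has to stand on its own. Parts (a) and (b) of your argument are correct and complete: the computation $a_{kl}=a_{k-1,l-1}$ from ${\cal A}=F_{p^t}^*{\cal A}F_{p^t}$ and the derivation of $\alpha_{1-s}=\overline{\alpha_s}$ from ${\cal A}^*=F_{p^t}^*{\cal A}$ do give \eqref{10}. The necessity half of (c) is also essentially right ($F_{p^t}\sim F_{p^t}^{-*}$, nonsingularity gives $p\ne x$, and multiplicativity of $f\mapsto f^{\vee}$ gives $p=p^{\vee}$), with one caveat: you dispose of the exceptional case $p(x)=x+(-1)^{n+1}$ under the identity involution by ``reading it off the canonical form of \cite{ser}.'' That canonical form is Theorem \ref{th1}, whose proof rests on this very lemma, so the appeal is circular. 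A self-contained argument is short and worth supplying: if $C^{-\mathrm T}C=M$ with $M$ similar to $J_n((-1)^n)$, then $\det(C^{-\mathrm T}C)=1$ while $\det M=(-1)^{n^2}$, which rules out odd $n$; for even $n$, $M$ is unipotent and $C-C^{\mathrm T}=C^{\mathrm T}(M-I)$ has rank $n-1$, which is odd, contradicting the fact that a skew-symmetric matrix over a field of characteristic not two has even rank.

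The genuine gap is part (d), which also carries the sufficiency half of (c). Your description of the system \eqref{11} does not match its actual structure: since the first $n-1$ columns of $F_{p^t}$ are $e_2,\dots,e_n$, the first $n-1$ columns of \eqref{11} merely assert that column $j+1$ of ${\cal A}^*$ equals column $j$ of ${\cal A}$, and this holds \emph{identically} for every matrix of the form \eqref{10}, with no use of $a_0=\dots=a_{m-1}=0$ or of $p=p^{\vee}$. All of the content sits in the last column, namely the $n$ coupled equations $-{\cal A}^*(c_n,\dots,c_1)^{\mathrm T}=(a_{n-1},\dots,a_1,a_0)^{\mathrm T}$, which mix the $a_k$ and $\bar a_k$ through the coefficients of $p(x)^t$. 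Nothing in your sketch verifies that this system is consistent, that the displayed choices of $a_m$ are the ones that make it so (in particular why $1$ suffices for even $n$ but $p(-1)^t$ is needed for odd $n$, and why the residual cases reduce to $a_m+\bar a_m=0$, solvable only for a nontrivial involution), or that the resulting ${\cal A}$ is nonsingular when $n$ is even --- your determinant computation covers only odd $n$. Since these verifications are precisely the content of (d), and the reverse implication of (c) is deferred to them, the lemma is not established by the proposal as written.
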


Suppose $F_{p(x)^t}$ is a
*cosquare. Since
$p(x)=p^{\vee}(x)$, the field
\begin{equation}\label{1f}
{\mathbb F}[\kappa]={\mathbb
F}[x]/p(x){\mathbb F}[x],\qquad
\kappa:=x+p(x){\mathbb F}[x],
\end{equation}
possesses the involution
\begin{equation}\label{1g}
f(\kappa)\mapsto
f(\kappa)^{\circ}:=
\overline{f}(\kappa^{-1}).
\end{equation}
It was proved in \cite[Lemma
7]{ser} that if
$f(\kappa)\in{\mathbb
F}[\kappa]$ and $f(\kappa)=
f(\kappa)^{\circ}$, then
$f(\kappa)$ is uniquely
representable as
$f(\kappa)=\varphi(\kappa)$, in
which
\begin{equation}\label{1h}
\varphi(x)=\bar{b}_rx^{-r}+
\bar{b}_{r-1}x^{-r+1} +\dots+
b_0+\dots+b_{r-1}x^{r-1}
+b_rx^r,
\end{equation}
$r$ is the integer part of
$(\deg p(x))/2$,
$b_0,b_1,\dots,b_r\in \mathbb
F$, $b_0=\bar{b}_0$, and if
$\deg p(x)$ is even then
\[
b_r=\begin{cases}
    0 & \text{if the involution $b\mapsto
\bar{b}$ is the identity},
\\
    \bar{b}_r & \text{if $b\mapsto
\bar{b}$ is not the identity and
$p(0)\ne 1$},\\
    -\bar{b}_r &
\text{if $b\mapsto \bar{b}$ is
not the identity and $p(0)=1$.}
  \end{cases}
\]

\begin{theorem}[{\cite[Theorem
3]{ser}}]
 \label{th1}
Let\/ $\mathbb F$ be a field of
characteristic not two with
involution $($the involution can
be the identity$)$. Every square
matrix $A$ over\/ $\mathbb F$ is
{\rm *}\!congruent to a direct
sum of matrices of the three
types:
\begin{itemize}
 \item [{\rm(i)}]
a singular Jordan block
$J_n(0)$;

 \item [{\rm(ii)}]
$\sqrt[\displaystyle *]{F_{p^t}}
\varphi(F_{p^t})$, in which
$F_{p^t}$ is the $n$-by-$n$
Frobenius block \eqref{3},
$p(x)$ satisfies \eqref{eeq2},
and $\varphi(x)$ is a nonzero
function of the form \eqref{1h};

 \item [{\rm(iii)}]
$\begin{bmatrix}0&I_{n}\\F_{p^t}
&0
\end{bmatrix}$,
in which $p(x)\ne x$ and $p(x)$
does not satisfy \eqref{eeq2}.
\end{itemize}
Any matrix of type {\rm(ii)} is
a {\rm *}\!cosquare root of
$F_{p^t}$ and hence is a
Toeplitz matrix. The summands
are determined by $A$ to the
following extent:
\begin{description}
  \item [Type (i)] uniquely.

  \item [Type (ii)]
up to replacement of the whole
group of summands
\[
\sqrt[\displaystyle
*]{F_{p^t}}\varphi_1(F_{p^t})
\oplus\dots\oplus
\sqrt[\displaystyle
*]{F_{p^t}}\varphi_s(F_{p^t})
\]
with the same ${p(x)^t}$ by
\[
\sqrt[\displaystyle
*]{F_{p^t}}\psi_1(F_{p^t})
\oplus\dots\oplus
\sqrt[\displaystyle
*]{F_{p^t}}\psi_s(F_{p^t})
\]
in which each $\psi_i(x)$ is a
nonzero function of the form
\eqref{1h} and the Hermitian
matrices
\begin{equation}\label{777}
\diagg(\varphi_1(\kappa),\dots,\varphi_s(\kappa))
\qquad\text{and}\qquad
\diagg(\psi_1(\kappa),\dots,\psi_s(\kappa))
\end{equation}
over the field\/ ${\mathbb
F}[\kappa]$ defined in
\eqref{1f} with the involution
\eqref{1g} are {\rm
*}\!congruent.

  \item [Type (iii)]
up to replacement of $F_{p^t}$
by $F_{(p^{\vee})^t}$.
\end{description}
\end{theorem}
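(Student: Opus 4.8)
This is Sergeichuk's classification of sesquilinear forms up to {\rm *}\!congruence, and the plan is to prove it along the lines of Riehm's treatment of bilinear forms, adapted to an arbitrary involution, in two reduction stages: first split off a singular part consisting of the type-(i) blocks, then analyse a nonsingular $A$ through its {\rm *}\!cosquare $M_A:=A^{-*}A$, eventually reducing everything to Hermitian forms over the extension fields $\mathbb F[\kappa]$. For the first stage, since $\mathrm{char}\,\mathbb F\ne 2$ the sesquilinear form with Gram matrix $A$ decomposes orthogonally as a nondegenerate form together with a form carried by a totally isotropic subspace; iterating the standard regularizing step — each step removing one $J_n(0)$ by an explicit {\rm *}\!congruence — shows that $A$ is {\rm *}\!congruent to $A_{\mathrm{reg}}\oplus J_{n_1}(0)\oplus\cdots\oplus J_{n_k}(0)$ with $A_{\mathrm{reg}}$ nonsingular. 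The sizes $n_1,\dots,n_k$ are recovered from the ranks of the matrices obtained by alternately applying $A$ and $A^{-*}$ to a fixed nondegenerate complement of the radical and its iterates, so they are {\rm *}\!congruence invariants; this gives the type-(i) summands uniquely, and from here on I may assume $A$ nonsingular.

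If $S^*AS=B$ then $M_B=S^{-1}M_AS$, so the rational canonical form $\bigoplus F_{p^t}$ of $M_A$ is a {\rm *}\!congruence invariant; and since $AM_AA^{-1}=M_A^{-*}$, the matrix $M_A$ is similar to $M_A^{-*}$, which at the level of elementary divisors forces $p(x)\ne x$ and makes $F_{p^t}$ and $F_{(p^\vee)^t}$ occur in $M_A$ with the same multiplicity. I would then write $A$ as an orthogonal direct sum of primary components, one for each unordered pair $\{p,p^\vee\}$. When $p\ne p^\vee$, or when $p=p^\vee$ is one of the exceptional linear polynomials $x+(-1)^{n+1}$ permitted only if the involution is the identity — exactly the cases in which $F_{p^t}$ is not a {\rm *}\!cosquare by Lemma~\ref{lemm}(c) — the component must be metabolic: no nondegenerate form can sit on a single $F_{p^t}$-isotypic piece (that would exhibit $F_{p^t}$ as a {\rm *}\!cosquare), so the two isotypic pieces are totally isotropic, the pairing between them is the hyperbolic one, and the component is a direct sum of copies of $\bigl[\begin{smallmatrix}0&I_n\\ F_{p^t}&0\end{smallmatrix}\bigr]$, unique up to interchanging $p$ and $p^\vee$. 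This is type (iii).

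When $p(x)\ne x$, $p=p^\vee$, and $F_{p^t}$ is a {\rm *}\!cosquare, fix $\mathcal A_0:=\sqrt[\displaystyle *]{F_{p^t}}$ as normalized in Lemma~\ref{lemm}(d). If $M_{\mathcal A}$ is similar to $\bigoplus_{i=1}^s F_{p^t}$, a preliminary {\rm *}\!congruence makes $M_{\mathcal A}=\bigoplus_i F_{p^t}$; then Lemma~\ref{lemm}(b) forces each diagonal block $\mathcal A_i$ to be a {\rm *}\!cosquare root of $F_{p^t}$, the conjugation identity $\mathcal A_i^{-1}F_{p^t}^*\mathcal A_i=F_{p^t}^{-1}=\mathcal A_0^{-1}F_{p^t}^*\mathcal A_0$ shows that $\mathcal A_i\mathcal A_0^{-1}$ centralizes $F_{p^t}^*$, and conjugating back by $\mathcal A_0$ gives $\mathcal A_i=\mathcal A_0\,\varphi_i(F_{p^t})$; the relation $\mathcal A_i=\mathcal A_i^*F_{p^t}$ then yields $\varphi_i(\kappa)=\varphi_i(\kappa)^{\circ}$ in $\mathbb F[\kappa]$, and by \cite[Lemma~7]{ser} one may take $\varphi_i$ of the form \eqref{1h}. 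So the component is {\rm *}\!congruent to $\bigoplus_i\mathcal A_0\varphi_i(F_{p^t})$, which is type (ii). Finally I would show that $\bigoplus_i\mathcal A_0\varphi_i(F_{p^t})\mapsto\diagg(\varphi_1(\kappa),\dots,\varphi_s(\kappa))$ is a bijection onto the Hermitian $s\times s$ matrices over $\mathbb F[\kappa]$ under which {\rm *}\!congruence over $\mathbb F$ corresponds to {\rm *}\!congruence over $\mathbb F[\kappa]$ with its involution \eqref{1g}: any $S$ giving such a {\rm *}\!congruence must centralize $\bigoplus_i F_{p^t}$, hence corresponds to some $G\in M_s(\mathbb F[\kappa])$, and the adjoint $X\mapsto\mathcal A_0^{-*}X^*\mathcal A_0$ goes over, under the identification of this centralizer with $M_s(\mathbb F[\kappa])$, to the conjugate transpose for \eqref{1g} — which is precisely what the normalization of $a_m$ in Lemma~\ref{lemm}(d) is engineered to achieve. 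This gives the type-(ii) uniqueness assertion involving \eqref{777}.

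The crux is that last step: checking that the passage to $M_s(\mathbb F[\kappa])$ both preserves and reflects {\rm *}\!congruence and is surjective — in particular that every self-adjoint $\varphi(\kappa)$ of shape \eqref{1h}, including the boundary behaviour of the top coefficient $b_r$ in the even-degree cases, is actually realized over $\mathbb F$. This is where the case distinctions in Lemma~\ref{lemm}(d) and in the description of \eqref{1h} must be matched with care, since a mismatched choice of $\mathcal A_0$ would replace \eqref{1g} by a twisted involution and spoil the classification. The remaining difficulties are comparatively routine: the Kronecker-type uniqueness in the first stage, and verifying in the second that the exceptional self-dual polynomials really do force metabolic components; $\mathrm{char}\,\mathbb F\ne 2$ is used in the orthogonal splittings of both stages.
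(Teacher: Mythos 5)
First, a point of reference: the paper does not prove this theorem. It is quoted verbatim from \cite[Theorem 3]{ser} and used as the known input to Theorem \ref{th2}, so there is no in-paper proof to compare yours against; the relevant benchmark is Sergeichuk's original argument, which ultimately reduces everything to Hermitian forms over the residue fields ${\mathbb F}[\kappa]$ exactly as the uniqueness clause \eqref{777} suggests. Your outline does follow the same broad Riehm-style strategy: regularize, treat the cosquare $M_A=A^{-*}A$ as a similarity invariant, split into primary components, show the non-self-dual and exceptional components are hyperbolic (type (iii)), and transfer the self-dual components to Hermitian forms over ${\mathbb F}[\kappa]$ (type (ii)). As a road map it is accurate.

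As a proof it is not complete, and one intermediate step is wrong as stated. After a preliminary *congruence arranging $M_{\cal A}=\bigoplus_{i=1}^s F_{p^t}=:M$, the matrix $\cal A$ is \emph{not} block diagonal: the relation ${\cal A}={\cal A}^*M$ only gives ${\cal A}=M^*{\cal A}M$, so by (a block version of) Lemma \ref{lemm}(a) every block ${\cal A}_{ij}$ is Toeplitz, with the off-diagonal blocks in general nonzero. Block-diagonalizing $\cal A$ is a \emph{consequence} of diagonalizing the associated Hermitian matrix over ${\mathbb F}[\kappa]$ and pulling that transformation back through the centralizer of $M$; it cannot be assumed at the outset, so your ``each diagonal block ${\cal A}_i$ is a *cosquare root'' comes in the wrong logical order. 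Beyond that, the steps you defer are exactly the hard ones: (1) that the degenerate part reduces to $\bigoplus_i J_{n_i}(0)$ and is a complete invariant (a Kronecker-type argument, not a one-line regularization); (2) that the component attached to an exceptional self-dual $p(x)=x+(-1)^{n+1}$ with identity involution is metabolic --- here there is only \emph{one} self-paired isotypic piece, the restricted form is nondegenerate, and your ``the two isotypic pieces are totally isotropic'' does not apply, so a genuine Lagrangian-construction argument is needed rather than the hyperbolic pairing you describe for $p\ne p^\vee$; and (3) that the passage to $M_s({\mathbb F}[\kappa])$ carries the adjoint to the involution \eqref{1g} and is surjective onto all $\varphi$ of shape \eqref{1h}, which is where the case analysis in Lemma \ref{lemm}(d) must actually be verified. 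Until those are supplied (they occupy most of \cite{ser}), the proposal remains an outline rather than a proof.
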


In a canonical form for similarity, one may choose as the direct
summands (canonical blocks) any matrices that are similar to the
Frobenius blocks \eqref{3}. Over some fields, this freedom of
choice can make it possible to achieve a pleasantly simple and
convenient canonical form for {\rm *}\!congruence. For example, if
$\mathbb F=\mathbb C$ is the field of complex numbers, then the
irreducible polynomials are all of the form $p(x)=x-\lambda$, and
$F_{(x-\lambda)^n}$ is similar to the $n$-by-$n$ Jordan block
$J_n(\lambda)$ with eigenvalue $\lambda$. The conditions
\eqref{eeq2} tell us that when the involution on $\mathbb C$ is
complex conjugation, then $F_{(x-\lambda)^n}$ is a *cosquare if
and only if $|\lambda|=1$; for the identity involution on $\mathbb
C$, $F_{(x-\lambda)^n}$ is a cosquare if and only if
$\lambda=(-1)^{n+1}$.

Define the $n$-by-$n$ matrices
\begin{equation*}\label{1aa}
\Gamma_n = \begin{bmatrix}
\text{\raisebox{-6pt}{\LARGE\rm
0}}&&&&&\text{\raisebox{-4pt}{$\sdotss$}}
\\&&&&1&
\text{\raisebox{-4pt}{$\sdotss$}}\\
&&&-1&-1&\\ &&1&1&\\ &-1&-1&
&&\\ 1&1&&&&\text{\LARGE\rm 0}
\end{bmatrix},\quad
\Delta_n=\begin{bmatrix}
\text{\raisebox{-4pt}{\large\rm
0}}&&&1
\\
&&\text{\raisebox{-4pt}{$\sdotsss$}}&i\\
&1&\text{\raisebox{-4pt}{$\sdotsss$}}&\\
1&i&&\text{\large\rm 0}
\end{bmatrix}.
\end{equation*}
Then $\Gamma_n^{-*}\Gamma_n$ is
similar to $J_n((-1)^{n+1})$ and
$\Delta_n^{-*}\Delta_n$ is
similar to $J_n(1)$. Thus, for
complex matrices we have the
following canonical forms for
congruence and for *congruence
with respect to complex
conjugation:

{\rm(i)} Every square complex
matrix is congruent to a direct
sum, determined uniquely up to
permutation of summands, of
matrices of the form
\begin{equation*}\label{eqqz}
J_n(0),
 \quad
\Gamma_n,
 \quad
\begin{bmatrix}0&I_n\\ J_n(\lambda)
&0
\end{bmatrix},
\end{equation*}
in which $\lambda \ne 0$, $\lambda\ne (-1)^{n+1}$, and $\lambda$
is determined up to replacement by $\lambda^{-1}$.

{\rm(ii)} Every square complex
matrix is {\rm *}\!congruent to
a direct sum, determined
uniquely up to permutation of
summands, of matrices of the
form
\begin{equation*}\label{eqq}
J_n(0),
 \quad
\lambda\Gamma_n,
 \quad
\begin{bmatrix}0&I_n\\ J_n(\mu)
&0
\end{bmatrix},
\end{equation*}
in which $|\lambda|=1$ and\/
$|\mu|>1$. Alternatively, one
may use the symmetric matrix
$\Delta_n$ instead of\/
$\Gamma_n$.

\section{*Congruence
of $A$ and $A^{\mathrm T}$}
 \label{sec2}

The problem of showing that $A$ and $A^{\mathrm T}$ are congruent
has been said to be difficult. In \cite{doc1}, the authors write,
``In spite of its elementary character, the proof of this result
is quite involved.'' In \cite{DSZ} we read that ``The proofs...are
rather complicated.'' However, the difficulty has been in the
methods, not in the results. The canonical forms in Theorem
\ref{th1} permit us to give a short and simple proof of a broader
result.

\begin{theorem}          \label{th2}
Over any field $\mathbb F$ of characteristic not two with
involution $a\mapsto \bar a$ $($the involution can be the
identity$)$, every square matrix $A$ is {\rm *}\!congruent to its
transpose. Moreover, there is a coninvolutory matrix $S$ over
$\mathbb F$ such that $A^{\mathrm T}=S^*AS$.
\end{theorem}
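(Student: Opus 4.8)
The plan is to reduce the statement to a claim about each canonical block of Theorem~\ref{th1}, using the observation that *congruence is compatible with direct sums and that if $A^{\mathrm T}=S^*AS$ with $\bar SS=I$ and $A'^{\mathrm T}=S'^*A'S'$ with $\bar{S'}S'=I$, then $(A\oplus A')^{\mathrm T}=(S\oplus S')^*(A\oplus A')(S\oplus S')$ with $\overline{S\oplus S'}(S\oplus S')=I$. There is one subtlety here: $A^{\mathrm T}$ must be *congruent to the \emph{same} canonical form as $A$ for this block-by-block strategy to close, so I would first argue that $A$ and $A^{\mathrm T}$ have the same canonical form. For type~(i) this is clear since $J_n(0)^{\mathrm T}$ is *congruent to $J_n(0)$ via the reversal permutation (which is real and involutory). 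For type~(iii), note that $\begin{bmatrix}0&I_n\\ F_{p^t}&0\end{bmatrix}^{\mathrm T}=\begin{bmatrix}0&F_{p^t}^{\mathrm T}\\ I_n&0\end{bmatrix}$, and $F_{p^t}^{\mathrm T}$ is similar to $F_{p^t}$, so after swapping the two block rows/columns this is *congruent to $\begin{bmatrix}0&I_n\\ F_{p^t}&0\end{bmatrix}$ itself; one checks the transforming matrix can be taken coninvolutory. The real content is type~(ii).

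For a type~(ii) block $M=\sqrt[\displaystyle *]{F_{p^t}}\,\varphi(F_{p^t})$, I would exploit that $M$ is a Toeplitz matrix (stated in Theorem~\ref{th1} and Lemma~\ref{lemm}(b)). The transpose of an $n\times n$ Toeplitz matrix $[\alpha_{i-j}]$ is $[\alpha_{j-i}]$, which equals $P_n\,[\alpha_{i-j}]\,P_n$ where $P_n$ is the reversal permutation matrix (the matrix with $1$'s on the anti-diagonal). Since $P_n$ is real and symmetric with $P_n^2=I$, we get $M^{\mathrm T}=P_n M P_n = P_n^* M P_n$ with $\bar P_n P_n = P_n^2 = I$, i.e.\ $P_n$ is coninvolutory. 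The same $P_n$ handles type~(iii) after the block swap, and $P_n$ (of the appropriate size) also handles type~(i). So the single unified choice $S = P_{n_1}\oplus P_{n_2}\oplus\cdots$, a direct sum of reversal matrices matching the canonical block sizes of $A$, is real, symmetric, involutory, hence in particular coninvolutory, and conjugates the canonical form of $A$ to its transpose.

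To assemble the full proof: write $A = U^*CU$ with $U$ nonsingular and $C$ the canonical direct sum from Theorem~\ref{th1}. By the Toeplitz/reversal observations above, $C^{\mathrm T}=P^*CP$ for the block reversal matrix $P$, which is coninvolutory. Then $A^{\mathrm T}=(U^*CU)^{\mathrm T}=U^{\mathrm T}C^{\mathrm T}\bar U = U^{\mathrm T}P^*CP\bar U$. On the other hand $A = U^*CU$ gives $C = U^{-*}AU^{-1}$, so $A^{\mathrm T} = U^{\mathrm T}P^* U^{-*}\,A\,U^{-1}P\bar U = (U^{-1}P\bar U)^*\,A\,(U^{-1}P\bar U)$; set $S := U^{-1}P\bar U$. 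The final step is to verify $\bar S S = I$: $\bar S S = \overline{U^{-1}P\bar U}\,U^{-1}P\bar U = \bar U^{-1}\bar P U\,U^{-1}P\bar U = \bar U^{-1}(\bar P P)\bar U = \bar U^{-1}\bar U = I$, using $\bar P = P$ and $P^2 = I$.

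The main obstacle I anticipate is not any single computation but the bookkeeping of the Toeplitz structure: one must be certain that \emph{every} canonical block appearing in Theorem~\ref{th1} is either Toeplitz (types (i) and (ii)) or else has the explicit $2\times 2$ block shape of type~(iii) whose transpose is visibly a block-permutation of itself. Type~(ii) is covered because the theorem explicitly states those blocks are *cosquare roots and hence Toeplitz; type~(i) is the Toeplitz matrix $J_n(0)$; type~(iii) needs the separate (but trivial) argument that swapping the two block rows and the two block columns, together with the similarity $F_{p^t}^{\mathrm T}\sim F_{p^t}$, realizes the transpose — and here one should double-check that the similarity can be taken \emph{congruence} rather than mere similarity, which follows because the companion-matrix similarity for $F\mapsto F^{\mathrm T}$ is given by a symmetric Hankel matrix. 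Once these structural facts are in hand, the coninvolutory conjugator emerges for free from the reversal matrix, and the displayed identity $\bar SS=I$ is immediate.
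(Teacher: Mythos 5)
Your proposal is correct and follows essentially the same route as the paper: reduce to the canonical blocks of Theorem~\ref{th1}, dispose of types (i) and (ii) by conjugating Toeplitz matrices with the reversal matrix, and handle type (iii) by a block swap together with a \emph{symmetric} similarity $S^{-1}F_{p^t}S=F_{p^t}^{\mathrm T}$ (the paper cites Taussky--Zassenhaus where you invoke the explicit symmetric Hankel matrix for a companion matrix). One caveat: the claim in your middle paragraph that a single direct sum of reversal matrices already conjugates the whole canonical form to its transpose is false for type (iii) blocks, which are not Toeplitz, but your final paragraph supplies the correct fix, so the argument stands.
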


\begin{proof}
Let $A_{\text{\rm can}}=S^*AS$ be a canonical form of $A$ for
*congruence, that is, a direct sum of matrices of the three types
described in Theorem \ref{th1}. If $A_{\text{\rm can}}$ is
*congruent to $A_{\text{\rm can}}^{\mathrm T}$, then $A$ is
*congruent to $A^{\mathrm T}$ since $R^*A_{\text{\rm
can}}R=A_{\text{\rm can}}^{\mathrm T}$ implies
\[
(SR\bar S^{-1})^*A(SR\bar
S^{-1})=A^{\mathrm T}.
\]
Hence it suffices to prove that all matrices of the three types
described in Theorem \ref{th1} are *congruent to their transposes,
and that $R$ can be chosen to be coninvolutory.

Matrices of types (i) and (ii)
are always *congruent to their
transposes since they are
Toeplitz matrices, and for any
Toeplitz matrix $B$ we have
\[
\begin{bmatrix}
\text{\raisebox{-4pt}{\Large
0}}&&1\\&\sdotss&\\[-4pt]1&&
\text{{\Large 0}}
\end{bmatrix}
 B
\begin{bmatrix}
\text{\raisebox{-4pt}{\Large
0}}&&1\\&\sdotss&\\[-4pt]1&&
\text{{\Large 0}}
\end{bmatrix}
 =B^{\mathrm T}.
\]
Notice that the congruence is
achieved via a real involutory
matrix.

For each matrix of type (iii) we
have
\[
\begin{bmatrix}0&S^{-1}\\S^*&0\end{bmatrix}
 \cdot
\begin{bmatrix}0&I\\F&0\end{bmatrix}
 \cdot
\begin{bmatrix}0&S\\S^{-*}&0\end{bmatrix}=
\begin{bmatrix}0&F^{\mathrm T}\\I&0\end{bmatrix}
=\begin{bmatrix}0&I\\F&0\end{bmatrix}^{\mathrm
T},
\]
in which $S$ is any nonsingular
matrix such that
$S^{-1}FS=F^{\mathrm T}$.
However, $S$ always can be
chosen to be symmetric
\cite{tau}, and if we do so then
\[
{\cal
S}=\begin{bmatrix}0&S\\S^{-*}&0
\end{bmatrix} =\begin{bmatrix}0&S\\
\bar{S}^{-1}&0\end{bmatrix}
\]
and $\bar{\cal S}{\cal S}=I$.
\end{proof}

It was proved in \cite[p.\,329]{Gow} that any nonsingular matrix
over a field can be represented as $A=EB$, in which $E$ is
involutory and $B$ is symmetric. As an immediate consequence of
Theorem \ref{th2}, we have the following factorization theorem.

\theoremstyle{corollary}
\newtheorem{corollary}[theorem]{Corollary}
\begin{corollary}
Over any field $\mathbb F$ of characteristic not two with
involution $a\mapsto \bar a$ $($the involution can be the
identity$)$, any square matrix $A$ can be represented as $A=EB$,
in which $E$ is coninvolutory and $B$ is symmetric.
\end{corollary}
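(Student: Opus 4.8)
The plan is to deduce the corollary directly from Theorem~\ref{th2}, applied not to $A$ but to its transpose. By Theorem~\ref{th2}, every square matrix over $\mathbb F$ is {\rm *}\!congruent to its transpose via a coninvolutory matrix; applying this with $A^{\mathrm T}$ in the role of ``$A$'' produces a coninvolutory matrix $S$ (so $\bar S S=I$) with
\[
A=(A^{\mathrm T})^{\mathrm T}=S^{*}A^{\mathrm T}S .
\]
This single identity already contains everything we need.

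Next I would set $E:=S^{*}$ and $B:=A^{\mathrm T}S$, so that the displayed identity reads $A=EB$. It then remains only to check the two required properties, both of which are one-line verifications. First, $E$ is coninvolutory: since $\overline{S^{*}}=S^{\mathrm T}$, we get $\bar E E=S^{\mathrm T}\bar S^{\mathrm T}=(\bar S S)^{\mathrm T}=I$. Second, $B$ is symmetric: transposing $B=A^{\mathrm T}S$ and then substituting $A=S^{*}A^{\mathrm T}S$ gives $B^{\mathrm T}=S^{\mathrm T}A=S^{\mathrm T}(S^{*}A^{\mathrm T}S)=(\bar S S)^{\mathrm T}A^{\mathrm T}S=A^{\mathrm T}S=B$.

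There is no real obstacle here: the whole weight of the statement rests on Theorem~\ref{th2}, and what remains is routine bookkeeping with the operations $X\mapsto X^{\mathrm T}$, $X\mapsto\bar X$, $X\mapsto X^{*}$ (the only facts used being $\overline{S^{*}}=S^{\mathrm T}$, $(S^{*})^{\mathrm T}=\bar S$, and $\bar S S=I$). One could equally well phrase the argument through inverses---seeking a coninvolutory $E$ with $E^{-1}A$ symmetric, i.e.\ $E^{-1}A=(E^{-1}A)^{\mathrm T}$---in which case it is worth recalling that a one-sided inverse of a square matrix is automatically two-sided, so $\bar S S=I$ also yields $S\bar S=I$. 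Finally, when the involution $a\mapsto\bar a$ is the identity, ``coninvolutory'' means ``involutory'' and the corollary recovers, for an arbitrary (not necessarily nonsingular) square matrix, the factorization $A=EB$ with $E$ involutory and $B$ symmetric established for nonsingular matrices in \cite{Gow}.
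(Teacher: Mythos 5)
Your proof is correct and follows essentially the same route as the paper: both deduce the factorization directly from Theorem~\ref{th2} by writing $A$ as (coninvolutory)\,$\times\,A^{\mathrm T}\times$\,(its conjugate transpose) and checking that the second factor grouped with $A^{\mathrm T}$ is symmetric. The only cosmetic difference is that you apply Theorem~\ref{th2} to $A^{\mathrm T}$ and set $E=S^{*}$, whereas the paper inverts the congruence $A^{\mathrm T}=S^{*}AS$ to get $A=EA^{\mathrm T}E^{*}$ and takes $B=\bar EA$; the two choices of $E$ and $B$ coincide.
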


\begin{proof}
Theorem \ref{th2} ensures that $A=EA^TE^*$ for some coninvolutory
matrix $E$. Therefore, $\bar{E}A=\bar{E}
EA^TE^*=A^T\bar{E}^T=(\bar{E}A)^T \equiv B$ is symmetric and
$A=EB$.
\end{proof}


\begin{thebibliography}{9}
%\end{thebibliography}

\bibitem{doc1}
D. \v{Z}. \raisebox{1pt}{-}\!\!Docovi\'{c} and K. D. Ikramov, A
square matrix is congruent to its transpose, \emph{J. Algebra} 257
(2002) 97--105.

\bibitem{DSZ}
D. \v{Z}. \raisebox{1pt}{-}\!\!Docovi\'{c}, F. Szechtman, and K.
Zhao, An algorithm that carries a square matrix into its transpose
by an involutory congruence transformation, \emph{Electron. J.
Linear Algebra} 10 (2003) 320-340.

\bibitem{Gow}
R. Gow, The equivalence of an invertible matrix to its transpose,
\emph{Linear and Multilinear Algebra} 8 (1980) 329-336.

\bibitem{Riehm}
C. Riehm, The equivalence of bilinear forms, \emph{J. Algebra} 31
(1974) 45-66.

\bibitem{R&S-F}
C. Riehm and M. Shrader-Frechette, The equivalence of sesquilinear
forms, \emph{J. Algebra} 42 (1976) 495-530.

\bibitem{ser}
V. V. Sergeichuk, Classification
problems  for system of forms
and linear mappings, \emph{Math.
USSR, Izvestiya} 31 (3) (1988)
481--501.

\bibitem{tau}
O. Taussky and H. Zassenhaus, On
the similarity transformation
between a matrix and its
transpose, \emph{Pacific J.
Math.} 9 (1959) 893-896.

\bibitem{Y&B}
E. L. Yip and C. S. Ballantine, Congruence and conjunctivity of
matrices to their adjoints, \emph{Linear Algebra Appl.} 41 (1981)
33-72.


\end{thebibliography}
\end{document}